\crefname{equation}{}{}
\colorlet{refkey}{orange!20}
\colorlet{labelkey}{blue!60}
\numberwithin{equation}{section}
\newtheorem*{theorem}{Theorem}
\newcommand{\abs}[1]{\left\lvert#1\right\rvert}
\newcommand{\ang}[1]{\left\langle #1 \right\rangle}
\newcommand{\angs}[1]{\langle #1 \rangle}
\newcommand{\RR}{\mathbb{R}}
\newcommand{\ZZ}{\mathbb{Z}}
\title{Exploring a planet, revisited}
\author{Yufei Zhao}
\address{Department of Mathematics, Massachusetts Institute of Technology, Cambridge, MA 02139, USA}
\email{yufeiz@mit.edu}
\begin{document}

\begin{abstract}
How should we place $n$ great circles on a sphere to minimize the furthest distance between a point on the sphere and its nearest great circle?
Fejes T\'oth conjectured that the optimum is attained by placing $n$ circles evenly spaced all passing through the north and south poles.
This conjecture was recently proved by Jiang and Polyanskii.
We present a short simplification of Ortega-Moreno's alternate proof of this conjecture.
\end{abstract}

\maketitle

\begin{center}
\begin{tikzpicture}[scale=.7]
	\draw (0,0) circle (1);
	\foreach \x in {10,30,50,70}
	\draw[dotted,thick] (0,0) ellipse ({sin(\x)} and 1);
\end{tikzpicture}
\end{center}

In a classic 1973 \textsc{Monthly} \emph{Research problems} column~\cite{Fej73}, wonderfully titled \emph{Exploring a planet}, L. Fejes T\'oth asked for the most economical way to explore a planet using $n$ satellites. Mathematically, the problem asks to place $n$ great circles on a sphere to minimize the furtherest distance between a point on the sphere and its nearest great circle. He conjectured that the optimal configuration has $n$ evenly spaced great circles all passing through the north and south poles, which he equivalently stated as:
\begin{quote}
If $n$ equal zones cover the sphere then their width is at least $\pi/n$. Here a \emph{zone} of width $w$ is defined as the parallel domain of a great circle of distance $w/2$.	
\end{quote}
This ``zone conjecture'' is a spherical analog of Tarski's plank problem from the 1930's, which asks to show that any covering of a ball by planks (a \emph{plank} is the space between two parallel hyperplanes) must use planks of total width at least the diameter of the ball~\cite{Tar32}.
Tarski gave a beautiful proof of the problem in dimensions 2 and 3 (see the introduction of \cite{KP17} for an exposition of Tarski's proof, which relies on an observation by Archimedes).
The problem in all dimensions was settled some twenty years later by Bang~\cite{Ban50,Ban51} in a stunning proof.
See \cite[Section 3.4]{BMP} for a survey of related problems.

Fejes T\'oth's zone conjecture was recently proved in a beautiful paper of Jiang and Polyanskii~\cite{JP17}. 
Ortega-Moreno~\cite{Ort21}, apparently unaware of Jiang and Polyanskii's work, found another very nice proof of the conjecture.
Amazingly, these two proofs are completely different!
They both prove the result in arbitrary dimensions.
The Jiang--Polyanskii proof builds on the ideas of Bang~\cite{Ban51} and Goodman and Goodman~\cite{GG45}, and it  allows zones of different widths.
Ortega-Moreno's proof, however, is inspired by Ball's solution to the complex plank problem~\cite{Bal01} and uses inverse eigenvectors and trignometric polynomials, though it only works for equal-width zones.

Here we give a streamlined presentation of Ortega-Moreno's proof. 
His proof starts by reformulating the problem in terms of inverse eigenvectors.
We eliminate the need to discuss inverse eigenvectors, thereby giving a shorter and more direct proof. 

The problem in $\RR^d$ is equivalent to showing that given $n$ hyperplanes through the origin in $\RR^d$, there is always a point on the unit sphere with distance at least $\sin\frac{\pi}{2n}$ to every hyperplane. Let $v_1, \dots, v_n$ be the unit normal 
vectors to the hyperplanes. By compactness of the sphere, it suffices to prove the following.

\begin{figure}\centering
\begin{tikzpicture}[scale=2,
					font=\scriptsize,
					baseline={([yshift=-1ex]current bounding box.center)}
					]
	\newcommand\ww{0.7}
	\newcommand\zz{30}
	\newcommand\yy{atan(\ww * sin(\zz)/cos(\zz))}
	\newcommand\ram{.1}
	\newcommand\aar{.3}

	\draw[->] (0,0) --(1,0)  node[label={right:$u = u_0$}] (u) {};
	\draw[->] (0,0) --(0,\ww)  node[label={above:\hspace{7ex}$w = u_{\pi/2}$}] (w){};
	\draw (0,0) ellipse (1 and \ww);
	\draw[->] (0,0) -- ({cos(\zz)}, {\ww * sin(\zz)})node[label=right:$u_{\pi/(2n)}$] {};
	\draw[->] (0,0) -- ({-90+\yy}:1) node[label=right:$v_1$]{};
	\begin{scope}[rotate=\yy]
		\draw (\ram,0)--(\ram,-\ram)--(0,{-\ram});
	\end{scope}
	\draw[dashed] (0,0) -- (0,-\ww);
	\draw (0,-\aar) node[label={[label distance=-.6ex]-180:{\tiny $<\frac{\pi}{2n}$}}]{} arc (-90:-90+\yy:\aar);
	\node at (-.5,.3) {$\operatorname{locus}(u_\theta)$};
\end{tikzpicture}
\qquad 
\begin{tikzpicture}[v/.style={fill,inner sep=2pt,circle},font=\small,
					baseline={([yshift=-1ex]current bounding box.center)}]
\begin{scope}[yscale=.7]
	\newcommand{\nn}{3}
	\draw (0,0)--(2*pi,0);
	\draw plot[domain=0:2*pi, samples=100,smooth] (\x, {cos(deg(\nn*\x))}); 
	\foreach \x/\y in {0/$0$,pi/$\pi$,2*pi/$2\pi$}{
		\draw (\x,.1)--(\x,-.1);	
		\node at (\x,-.4) {\scriptsize\y};
	}
	\foreach \x in {pi/3,2*pi/3,4*pi/3,5*pi/3}{
		\draw (\x,.1)--(\x,-.1);		    
	}
	\foreach \x in {0,{ pi/(2*\nn) },pi,{ pi + pi/(2*\nn) },{2*pi}}
		\node[v] at ({ \x },{ cos(deg(\nn * (\x))) }) {};
	\node at (-1,1) {$n = \nn$};
	\node at (-1,0) {$\cos \nn\theta$};
\end{scope}

\begin{scope}[yscale=.7,yshift=-3cm]
	\newcommand{\nn}{4}
	\draw (0,0)--(2*pi,0);
	\draw plot[domain=0:2*pi, samples=100,smooth] (\x, {cos(deg(\nn*\x))}); 
	\foreach \x/\y in {0/$0$,pi/$\pi$,2*pi/$2\pi$}{
		\draw (\x,.1)--(\x,-.1);	
		\node at (\x,-.4) {\scriptsize\y};
	}
	\foreach \x in {pi/4,2*pi/4,3*pi/4,5*pi/4,6*pi/4,7*pi/4}{
		\draw (\x,.1)--(\x,-.1);		    
	}
	\foreach \x in {0,{ pi/(2*\nn) },pi,{ pi + pi/(2*\nn) }, {2*pi}}
		\node[v] at ({ \x },{ cos(deg(\nn * (\x))) }) {};
	\node at (-1,1) {$n = \nn$};
	\node at (-1,0) {$\cos \nn\theta$};
\end{scope}
\end{tikzpicture}

\medskip

\caption*{\footnotesize (\textsc{Left}) The vectors used in the proof. (\textsc{Right})
The dotted points are known values of $f(\theta)$ overlaid on the plot of $\cos n\theta$. Since $\abs{f(\theta)} < 1$ for all $\theta \notin \ZZ \pi$, the intermediate value theorem shows that $f(\theta)$ and $\cos n\theta$ have at least $2n-4$ additional crossings in $[0,2\pi)$, not counting the ones drawn.
} \label{fig}
\end{figure}

\begin{theorem}
Let $v_1, \dots, v_n$ be unit vectors in $\RR^d$.
If $u$ maximizes
$\prod_{i=1}^n \abs{\ang{v_i, u}}$ among unit vectors,
then $\abs{\ang{v_i, u}} \ge \sin\frac{\pi}{2n}$ for all $i$.
\end{theorem}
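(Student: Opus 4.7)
I plan to argue by contradiction. Suppose $\abs{\ang{v_1, u}} < \sin\frac{\pi}{2n}$. After flipping the signs of the $v_i$ so that $\ang{v_i, u} > 0$ for every $i$---which changes neither the hypothesis nor the maximized product---pick the unit vector $w$ in $\operatorname{span}(u, v_1) \cap u^\perp$, with the sign chosen so that the root $\theta^*$ of $\theta \mapsto \ang{v_1,\, \cos\theta\, u + \sin\theta\, w}$ nearest $0$ is positive. Setting $u_\theta := \cos\theta\, u + \sin\theta\, w$, a unit vector for every $\theta$, I will study
\[
f(\theta) := \prod_{i=1}^n \frac{\ang{v_i, u_\theta}}{\ang{v_i, u}} = \prod_{i=1}^n(\cos\theta + c_i\sin\theta), \qquad c_i := \ang{v_i, w}/\ang{v_i, u},
\]
a real trigonometric polynomial of degree at most $n$ with $f(0) = 1$. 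Since $u$ maximizes $\prod_i\abs{\ang{v_i,\cdot}}$ over the unit sphere, $\abs{f(\theta)} \le 1$ for all $\theta$, so $f'(0) = 0$. Writing $v_1 = \alpha u + \beta w$ with $\alpha^2 + \beta^2 = 1$, $\alpha > 0$, the hypothesis $\alpha < \sin\frac{\pi}{2n}$ gives $\abs{\tan\theta^*} = \alpha/\abs{\beta} < \tan\frac{\pi}{2n}$, so $\theta^* \in (0, \frac{\pi}{2n})$ and $f(\theta^*) = 0$.

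The heart of the argument compares $f$ with $\cos n\theta$. Let $g(\theta) := f(\theta) - \cos n\theta$, another trigonometric polynomial of degree at most $n$, and recall the standard fact that any such polynomial has at most $2n$ zeros per period counted with multiplicity unless identically zero. It suffices to exhibit $2n + 1$ zeros of $g$. At the $2n$ points $\theta_k := k\pi/n$, $\cos n\theta_k = (-1)^k$ and $\abs{f} \le 1$ give $(-1)^k g(\theta_k) \le 0$; the signs of $g$ at the $\theta_k$ thus alternate, and the intermediate value theorem yields one zero in each of the $2n - 2$ intervals $(\theta_k, \theta_{k+1})$ for $k = 1, \ldots, 2n - 2$. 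Moreover $g(0) = g'(0) = 0$ (from $f(0) = 1$, $f'(0) = 0$, and $(\cos n\theta)'|_0 = 0$), making $0$ a zero of multiplicity at least $2$. Finally, $g(\theta^*) = -\cos n\theta^* < 0$ while $g(\pi/n) = f(\pi/n) + 1 \ge 0$, so IVT produces one additional zero in $(\theta^*, \pi/n]$ beyond those already counted. Totaling, $g$ has at least $2 + (2n - 2) + 1 = 2n + 1$ zeros with multiplicity.

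Hence $g \equiv 0$, i.e.\ $f \equiv \cos n\theta$; but $\cos n\theta$ has no zeros in $(-\frac{\pi}{2n}, \frac{\pi}{2n})$, contradicting $f(\theta^*) = 0$. The main obstacle I anticipate is the zero-counting bookkeeping: the multiplicity-two zero at the origin is indispensable and depends on both the maximality of $u$ (giving $f'(0) = 0$) and on $(\cos n\theta)'|_0 = 0$. A related subtlety is the degenerate case where $g(\theta_k) = 0$ at some interior $\theta_k$: there $\abs{f(\theta_k)} = 1$ combined with $\abs{f} \le 1$ forces $f'(\theta_k) = 0$, upgrading $\theta_k$ to a multiplicity-two zero of $g$ and compensating for any lost sign-change zeros in the adjacent intervals, so the bound of $2n+1$ still goes through. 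The remaining pieces---that the hypothesis translates into a zero of $f$ in the specified interval, and that $\cos n\theta$ has no such zero---are short direct calculations.
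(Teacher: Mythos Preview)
Your argument is correct and follows the same overall strategy as the paper: parametrize a circle through $u$ in the direction of $v_1$, form the normalized product $f(\theta)$, compare it to $\cos n\theta$, and obtain a contradiction by counting zeros of $g=f-\cos n\theta$, using both the bound $|f|\le 1$ from maximality and $f'(0)=0$.

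The one genuine technical difference is in the choice of $w$. You take $w$ to be a \emph{unit} vector, so $u_\theta$ stays on the sphere, $|f(\theta)|\le 1$ only weakly, and the root $\theta^*$ of $\langle v_1,u_\theta\rangle$ lands somewhere in $(0,\tfrac{\pi}{2n})$. This forces you into multiplicity bookkeeping: you need the double zero at $0$, the extra zero in $(\theta^*,\tfrac{\pi}{n}]$, and a case analysis when some $g(\theta_k)=0$, all to push the count to $2n+1$ and conclude $g\equiv 0$. The paper instead \emph{shrinks} $w$ so that $|w|<1$ and the root lands exactly at $\tfrac{\pi}{2n}$; then $|u_\theta|<1$ on $(0,\pi)$, giving the strict inequality $|f(\theta)|<1$ there, so the sign changes at $\theta_k=k\pi/n$ are all strict and no degenerate cases arise. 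With $f'(0)=0$ the paper then factors $g(\theta)=\sin^2\theta\cdot\psi_2(\theta)$ with $\deg\psi_2\le n-2$, bounding the number of \emph{distinct} zeros of $g$ by $2n-2$, which contradicts the $2n$ distinct zeros already exhibited. So the paper trades your multiplicity analysis for a scaling trick and a factorization; it is a bit cleaner, but your route is equally valid once the degenerate cases are handled as you describe.
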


\begin{proof}
Suppose for contradiction that $\abs{\ang{v_1, u}} < \sin\frac{\pi}{2n}$ (note that $\abs{\ang{v_1, u}} > 0$ due to the choice of $u$) . Then (see left figure) in the 2-dimensional plane spanned by $\{u, v_1\}$,
we can take a vector $w \perp u$ with $\abs{w} < 1$ such that, setting
\[
u_\theta := (\cos \theta) u + (\sin \theta) w,
\]
one has $u_{\pi/(2n)} \perp v_1$ (picture what happens when $\abs{w}=1$, and then shorten $w$). 
Let
\[
f(\theta) = \prod_{i=1}^n \frac{\angs{v_i, u_\theta}}{\angs{v_i, u}}.
\]
We have $u_{\theta + \pi} = -u_\theta$ and so $f(\theta + \pi) = (-1)^n f(\theta)$. 
Let us focus on the domain $\theta \in [0,\pi)$.
Since $u_0 = u$, we have $f(0) = 1$.
Since $v_1 \perp u_{\pi/(2n)}$, we have $f(\frac{\pi}{2n}) = 0$. 
So $f(\theta) = \cos n\theta$ for $\theta \in\{ 0, \frac{\pi}{2n}\}$.
Since $\abs{w} < 1$, for any $\theta \in (0, \pi)$ 
we have $\abs{u_\theta} < 1$ and thus $\abs{f(\theta)} < 1$ by the maximality hypothesis on $u$.
So $f(\theta) - \cos n\theta$ has sign changes at $\theta = \pi/n, 2\pi/n, \dots, (n-1)\pi/n$ (where $\cos n\theta$ alternates between $\pm 1$), and thus it has at least $n-2$ distinct zeros in $(\pi/n,(n-1)\pi/n)$. 
Combining with the two additional zeros at $\theta = 0, \pi/(2n)$, we see that $f(\theta) - \cos n\theta$ has at least $n$ distinct zeros in $[0, \pi)$, and hence at least $2n$ distinct zeros in $[0, 2\pi)$ (see right figure).

Expanding, for some trignometric polynomial $\psi_1(\theta)$ of degree at most $n-2$,
\[
f(\theta) = \prod_{i=1}^n \frac{\ang{v_i, (\cos\theta) u + (\sin \theta) w}}{\ang{v_i, u}}
= \cos^n \theta 
+ \sum_{i=1}^n \frac{\ang{v_i, w}}{\ang{v_i, u}} \cos^{n-1} \theta\sin \theta + \sin^2 \theta  \psi_1(\theta).
\]
We saw in the previous paragraph that $f(\theta)$ is maximized at $\theta = 0$, and thus
\[
0 = f'(0) = \sum_{i=1}^n \frac{\ang{v_i, w}}{\ang{v_i, u}}.
\]
So the second term in the expansion of $f(\theta)$ above is zero.
Thus
\[
f(\theta) -\cos n\theta 
= \cos^n \theta + \sin^2 \theta \psi_1(\theta) - \cos n\theta
= \sin^2 \theta \psi_2(\theta),
\]
for some trignometric polynomial $\psi_2(\theta)$ of degree at most $n-2$. 
So $f(\theta) -\cos n\theta  = \sin^2 \theta \psi_2(\theta)$ has at most $2n-2$ distinct zeros in $[0,2\pi)$, contradicting the earlier claim.
\end{proof}

\subsection*{Acknowledgments}
The author thanks the referees and the editor for helpful comments.

The author was supported by NSF award DMS-1764176, NSF CAREER award DMS-2044606, a Sloan Research Fellowship, and the MIT Solomon Buchsbaum Fund.

\end{document}